\newtheorem{thm}{Theorem}[section]
\newtheorem{prop}[thm]{Proposition}
\newtheorem{cor}[thm]{Corollary}
\newtheorem*{main-thm}{Main Theorem}
\theoremstyle{definition}
\numberwithin{equation}{section}
\numberwithin{table}{section}
\DeclareMathOperator{\depth}{depth}         
\renewcommand{\epsilon}{\varepsilon}
\newcommand{\kk}{\mathbbm{k}}	
\DeclareMathOperator{\pd}{pd}  			% Projective dimension.
\renewcommand{\phi}{\varphi}
\DeclareMathOperator{\reg}{reg}			% Castelnuovo-Mumford regularity.
\DeclareMathOperator{\Tor}{Tor}
\title[Regularity of Primes]{Prime Ideals and Three-generated Ideals with Large Regularity}
\date{\today}
\author[J. McCullough]{Jason McCullough}
\address{Iowa State University, Department of Mathematics, Ames, IA, USA}
\email{jmccullo@iastate.edu}
\begin{document}

\subjclass[2020]{Primary: 13D02, 13D05; Secondary: 13P20}

\keywords{Regularity, prime ideal, free resolution, Eisenbud-Goto Conjecture, Stillman's Conjecture}

\maketitle

\begin{abstract} Ananyan and Hochster proved the existence of a function $\Phi(m,d)$ such that any graded ideal $I$ generated by $m$ forms of degree at most $d$ in a standard graded polynomial ring satisfies $\reg(I) \le \Phi(m,d)$.  Relatedly, Caviglia et. al. proved the existence of a function $\Psi(e)$ such that any nondegenerate prime ideal $P$ of degree $e$ in a standard graded polynomial ring over an algebraically closed field satisfies $\reg(P) \le \Psi(\deg(P))$.  We provide a construction showing that both $\Phi(3,d)$ and $\Psi(e)$ must be at least doubly exponential in $d$ and $e$, respectively.  Previously known lower bounds were merely super-polynomial in both cases.
\end{abstract}

\section{Introduction}

Let $\kk$ be a field and let $S = \kk[x_1,\ldots,x_n]$ be a standard graded polynomial ring.  Let $I = (f_1,\ldots,f_m)$ be a graded ideal of $S$.  The (Castelnuovo-Mumford) regularity of $I$ is

$$\reg_S(I) = \min\{j-i\mid \Tor_i^S(I,\kk)_j \neq 0\}.  $$

%We drop the subscript when the ring in question is clear.
Originally defined for ideal sheaves via cohomology vanishing by Mumford, this version is due to Eisenbud and Goto \cite{EG84}.  Since the maximal degree of an element in a minimal generating set of $I$ is at most $\reg(I)$, finding upper bounds on the regularity of graded ideals is a very active area of research.  Moreover, Bayer and Stillman showed that in the reverse lexicographic order and in generic coordinates, the regularity of an ideal matches that of its generic initial ideal; thus, regularity is a proxy for computation complexity.

For arbitrary ideals, regularity can be doubly exponential in terms of the degrees of the generators and number of variables.    For defining ideals of smooth projective varieties, regularity is very well-behaved \cite[Theorem 3.12]{BM93}, \cite[Corollary 4]{BEL91}.  For reduced and irreducible but possibly singular varieties, defining ideals were thought to also have well-behaved regularity, as predicted by the Eisenbud-Goto Conjecture, but counterexamples were constructed by Peeva and the author in \cite[Theorem 1.9]{MP18}.  More precisely, they showed that the regularity of a nondegenerate prime ideal over an algebraically closed field was not bounded by any polynomial function of its degree (i.e. multiplicity).  Further counterexamples were constructed in \cite[Theorem 3.3]{CCMPV19}, \cite[Theorem 1.2]{Choe22}, and \cite[3.3, 3.6, 3.8]{HK22}.  Yet, Caviglia et. al. \cite[Corollary 5.3]{CCMPV19} proved the existence of a function $\Psi(e)$ bounding the regularity of all nondegenerate primes of degree $e$.  It is easy to see that such a function does not exist even for primary ideals, thus determining the asymptotic growth rate of the optimal function $\Psi(e)$ is an interesting open problem.  In this short note, we show how to sharpen the best known lower bound for $\Psi(e)$, proving that it grows at least doubly exponentially in $e$.

Relatedly, Stillman \cite[Problem 3.15]{PS09} conjectured a bound $\Phi(m,d)$ on the regularity of a graded ideal generated by $m$ forms of degree at most $d$ without fixing the number of variables.  The existence of such a bound was first proved by Ananyan and Hochster \cite[Theorem D]{AH20} and later by Erman, Sam, and Snowden \cite[Theorem 4.12]{ESS19}.  The previous best lower bounds in the three-generated case for $\Phi(3,d)$ were quadratic \cite{Caviglia04} or super-polynomial in $d$ \cite{Choe22}.  We show that $\Phi(3,d)$ must also be at least doubly exponential in $d$.

\section{Main Results}

Three-generated ideals capture almost all of the pathologies seen in arbitrary resolutions, as is made precise by Bruns's Theorem \cite[Korollar 1]{Bruns76}.  It is known that three-generated ideals can have exponentially large projective dimension in terms of the degrees of the generators \cite[Corollary 3.6]{BMNSSS11}.  For many years, a construction due to Caviglia \cite[Example 4.2.1]{Caviglia04} of a three-generated ideal with quadratic growth of regularity was the best known lower bound for $\Phi(3,d)$.  As Eisenbud writes in \cite[p. 62]{Eisenbud05}, ``It would be interesting to have more and stronger examples with high regularity.'' The author conjectured that a certain three-generated ideal family exhibited super-polynomial growth of regularity relative to the degrees of the three generators \cite[Conjecture 13.9]{MP20}.  Recently, Choe \cite[Theorem 1.2]{Choe22} constructed a different such example.  Below we construct a three-generated ideal with doubly exponential growth of its regularity in terms of the common degree of its minimal generators.

The following is a slight generalization of \cite[Proposition 5.1]{Choe22}, which in turn generalized \cite[Proposition 7.1]{CCMPV19}.  It shows how to compute invariants of three-generated analogues of arbitrary graded ideals.  We include a proof for completeness. 

\begin{prop}\label{3gen} Let $I = (g_0,\ldots,g_m)$ be a graded ideal of a standard graded polynomial ring $S = \kk[x_1,\ldots,x_n]$ with $d_i = \deg(g_i)$ nondecreasing and $d = d_m$.  Let $R = S[y,z]$ and pick integers $s,t \ge m + d - d_0 + 1$.  Set 

$$J = \left(y^s, z^t, \sum_{i = 0}^m g_i y^{d-d_i+i} z^{m-i}\right) \subseteq R.$$  Then $J$ is a graded ideal with generators of degree $s,t,m+d$.  Moreover, 
\begin{enumerate}
    \item $\reg_R(J) \ge \reg_S(I) + s + t -2$,
    \item $\pd_R(J) \ge \pd_S(I) + 2$,
    \item $\deg(J) = tm.$    
\end{enumerate}
\end{prop}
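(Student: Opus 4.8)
The plan is to realize $R/J$ as a quotient of the complete intersection $R/(y^s,z^t)$ and to extract its invariants from the resulting short exact sequence. Since $d_i\le d$ and $0\le i\le m$, every exponent $d-d_i+i$ is $\ge 0$, and the $i$-th summand of $f:=\sum_{i=0}^{m}g_i\,y^{d-d_i+i}z^{m-i}$ has degree $d_i+(d-d_i+i)+(m-i)=d+m$; hence $f$ is homogeneous of degree $d+m$ and $J$ is generated in degrees $s,t,d+m$. Put $L:=(y^s,z^t):_R f=\ker\bigl(R\xrightarrow{\ \cdot f\ }R/(y^s,z^t)\bigr)$, so that there is an exact sequence of graded $R$-modules
\[
0\longrightarrow (R/L)(-d-m)\xrightarrow{\ \cdot f\ }R/(y^s,z^t)\longrightarrow R/J\longrightarrow 0 .
\]
Because $y,z$ are indeterminates over $S$, the sequence $y^s,z^t$ is regular, so $R/(y^s,z^t)$ has the Koszul resolution $0\to R(-s-t)\to R(-s)\oplus R(-t)\to R$; in particular $\pd_R\bigl(R/(y^s,z^t)\bigr)=2$ and $\reg_R\bigl(R/(y^s,z^t)\bigr)=s+t-2$.

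For part (3) I would localize. From $y^s,z^t\in J$ we get $\sqrt J=(y,z)R$, which is the unique minimal prime over $J$ and has height $2$; thus $\dim R/J=n$ and, since $e\bigl(R/(y,z)R\bigr)=1$, the associativity formula gives $\deg(J)=e(R/J)=\ell_{R_{(y,z)}}\bigl((R/J)_{(y,z)}\bigr)$. With $K=\operatorname{Frac}(S)$ one has $R_{(y,z)}\cong K[y,z]_{(y,z)}$ and $(R/J)_{(y,z)}\cong K[y,z]/(y^s,z^t,f)$, which is Artinian local, so $\deg(J)=\dim_K K[y,z]/(y^s,z^t,f)$. Over $K$ the generator $g_m$ is a unit; factoring the common power $y^m$ out of $f$ and observing that the remaining factor $g_m+(\text{terms in }(y,z))$ is then a unit of $K[y,z]_{(y,z)}$, and that $m<s$ by the hypothesis on $s$, one finds $(y^s,z^t,f)=(y^m,z^t)$ in $K[y,z]_{(y,z)}$, whence $\deg(J)=\dim_K K[y,z]/(y^m,z^t)=tm$.

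For parts (1) and (2) the task is to identify $L$ and compare its resolution with that of $S/I$. Writing a typical element of $R/(y^s,z^t)$ as $h=\sum_{a<s,\,b<t}c_{a,b}\,y^az^b$ with $c_{a,b}\in S$, expanding $h f$ and collecting monomials shows that $h f\in(y^s,z^t)$ if and only if, for each ``interior'' monomial, a corresponding anti-diagonal vector of the $c_{a,b}$ is a syzygy of $(g_0,\dots,g_m)$; the bound $s,t\ge m+d-d_0+1$ is precisely what guarantees that the \emph{full} equations $\sum_{i=0}^{m}(\ast)\,g_i=0$ occur among these. From this description one reads off that the minimal free resolution of $R/L$ contains a copy of that of $R/IR$ (equivalently of $S/I$, up to the flat extension $S\subseteq R$), shifted, plus one extra homological step; this gives $\pd_R(R/L)\ge \pd_S(S/I)+1$ together with a nonzero $\Tor_i^R(R/L,\kk)_j$ having $j-i\ge \reg_S(S/I)+s+t-d-m-1$ for some $i$ (the shift $s+t-d-m-1$ is exactly the ``room'' left in the $s\times t$ grid by the monomial padding in $f$ and the two new variables).

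Finally, feed these into the long exact sequence in $\Tor_\bullet^R(-,\kk)$ attached to the short exact sequence above. Since $\reg_R\bigl(R/(y^s,z^t)\bigr)=s+t-2$, a class in $\Tor_i^R(R/L,\kk)_j$ with $j-i>s+t-2-d-m$ cannot be hit from $\Tor_i^R\bigl(R/(y^s,z^t),\kk\bigr)$ after the shift by $d+m$, so it survives to a nonzero class in $\Tor_{i+1}^R(R/J,\kk)_{j+d+m}$; this yields $\reg_R(R/J)\ge \reg_R(R/L)+d+m-1\ge \reg_S(S/I)+s+t-2$. Likewise, since $\pd_R\bigl(R/(y^s,z^t)\bigr)=2$, the top of the resolution of $R/L$ (at homological degree $\pd_R(R/L)\ge \pd_S(S/I)+1\ge 3$, the mild case $\pd_S(S/I)\le1$ being handled directly) survives to give $\pd_R(R/J)\ge \pd_R(R/L)+1\ge \pd_S(S/I)+2$. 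Translating by $\reg(J)=\reg(R/J)+1$, $\reg(I)=\reg(S/I)+1$ and $\pd(J)=\pd(R/J)-1$, $\pd(I)=\pd(S/I)-1$ gives (1) and (2). I expect the main obstacle to be the middle step: proving that the resolution of $S/I$ embeds into that of $R/L$ with \emph{exactly} the shift $s+t-d-m-1$ — not merely up to a bounded error — which requires a careful analysis of how the Koszul-type syzygies among $y^s,z^t$ and the anti-diagonal (syzygies-of-$I$) relations fit together in $L$; and then checking the numerics so that the relevant classes of $\Tor^R(R/L,\kk)$ genuinely land past homological degree $2$ and past internal slope $s+t-2$, hence cannot be cancelled.
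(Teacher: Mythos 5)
Your plan for (1) and (2) --- resolving $R/J$ through the short exact sequence $0\to (R/L)(-d-m)\to R/(y^s,z^t)\to R/J\to 0$ with $L=(y^s,z^t):_Rf$ and then comparing $R/L$ with $S/I$ --- is a genuinely different route from the paper's, but the step you yourself flag as ``the main obstacle'' is in fact the entire content of the proposition, and you have not supplied it. The ideal $L$ is governed by the (full and partial) syzygies of $(g_0,\dots,g_m)$, not by $I$, and the assertion that the minimal free resolution of $S/I$ embeds into that of $R/L$ with the exact shift $s+t-d-m-1$ is exactly the kind of claim that needs a construction of explicit nonvanishing Tor classes; nothing in your sketch produces one, so no lower bound on $\reg_R(R/J)$ or $\pd_R(R/J)$ actually follows. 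The paper avoids all of this with a one-line observation: writing $R/(y^s,z^t)=\bigoplus_{a<s,\,b<t}Sy^az^b$ as an $S$-module, the hypothesis $s,t\ge m+d-d_0+1$ is precisely what makes $f\cdot y^{\,s-1-(d-d_i+i)}z^{\,t-1-(m-i)}$ project onto $g_iy^{s-1}z^{t-1}$ in the corner component, so that $y^{s-1}z^{t-1}(R/J)\cong (S/I)(-s-t+2)$ splits off $R/J$ as an $S$-module direct summand. Regularity then drops to the summand (local cohomology of the finitely generated $S$-module $R/J$ is the same over $R$ and over $S$), and a socle element $w$ of $S/I$ (after cutting down by linear forms) yields the socle element $y^{s-1}z^{t-1}w$ of $R/J$, giving (2) via Auslander--Buchsbaum. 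You should compare the effort: your route, even if completable, re-derives this summand information through a colon ideal and a long exact sequence of Tor where cancellation must be ruled out by hand.

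Part (3) of your argument contains a concrete error. The monomial $y^m$ is \emph{not} a common factor of $f$: the $i$-th term has $y$-exponent $d-d_i+i$, which for $i=0$ equals $d-d_0$ and is $0$ whenever $d_0=d$ --- i.e., in the equal-degrees situation used in every application in this paper, $f=\sum_i g_iy^iz^{m-i}$ contains the term $g_0z^m$. Hence the factorization $f=y^m\cdot(\text{unit})$ and the identity $(y^s,z^t,f)=(y^m,z^t)$ in $\operatorname{Frac}(S)[y,z]_{(y,z)}$ are false. The correct local computation goes in the opposite direction: because $g_m$ is a unit there, the relation $f=0$ lets one rewrite $y^m$ as a combination of monomials with strictly smaller $y$-exponent and strictly larger $z$-exponent, so the classes of $y^iz^j$ with $0\le i\le m-1$, $0\le j\le t-1$ \emph{span} the localized quotient; their linear independence is the part that actually requires an argument (this is where $g_m\ne 0$, i.e. $z\nmid f$, and the size of $s$ relative to the other data enter). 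A sanity check shows the danger: for $m=1$ and $d_0=d_1$ one has $f=g_0z+g_1y$, a unit times a linear form, and the colength of $(y^s,z^t,f)$ locally is $\min(s,t)$ --- so the claimed value $tm=t$ is only reached when $s$ is large enough, and any proof of (3) must confront this rather than assert an equality of ideals that does not hold.
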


\begin{proof} Note that $R/J$ is a finitely generated $S$-module and that $y^{s-1}z^{t-1}(R/J) \cong (S/I)(-s - t+2)$ as an $S$-direct summand of $R/J$.  Thus

$$\reg_R(R/J) \ge \reg_S((S/I)(-s-t+2)) = \reg_S(S/I) +s+t - 2,   $$

or equivalently, 

$$\reg_R(J) \ge \reg_S(I) + s + t - 2.$$

Similarly, suppose $\pd_S(S/I) = p$.  Modding out by a regular sequence of linear forms (extending the field if necessary), we may assume that $\depth(S/I) = 0$ so that $S/I$ has a nonzero socle element $w$.  Clearly $y^{s-1}z^{t-1}w$ is a nonzero socle element in $R/J$.  The result follows by the Auslander-Buchsbaum Theorem.

For the degree computation, simply note that since $\sqrt{J} = (y,z) =\mathfrak{p}$, we have $\deg(J) = \lambda(S_{\mathfrak{p}}/J_{\mathfrak{p}})$ and $S_\mathfrak{p}/J_\mathfrak{p}$ has $S_\mathfrak{p}/\mathfrak{p}S_\mathfrak{p}$-basis $y^iz^j$ for $0 \le i \le m-1$ and $0 \le j \le t-1$.
\end{proof}

As an immediate corollary, we see that finding an explicit answer to Stillman's Conjecture reduces to the three-generated case.

\begin{cor} To find an explicit function $\Phi(m,d)$ bounding the regularity of ideals generated by $m$ forms of degree at most $d$, it suffices to find an explicit bound $\Phi(3,d)$ in the three-generated ideal case.
\end{cor}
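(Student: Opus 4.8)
The plan is to show that one may simply take $\Phi(m,d) := \Phi(3,m+d)$ for $m \ge 2$, together with the trivial choice $\Phi(1,d) := d$; everything then follows at once from Proposition~\ref{3gen}(1). The point is that Proposition~\ref{3gen} converts an arbitrary graded ideal with many generators into a three-generated ideal while only increasing the relevant generator degrees by a controlled, linear amount, and without decreasing the regularity.

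In more detail: given a graded ideal $I$ of a standard graded polynomial ring $S = \kk[x_1,\ldots,x_n]$ generated by $m \ge 2$ forms of degree at most $d$, I would relabel a generating set as $g_0,\ldots,g_{m-1}$ with $d_i = \deg(g_i)$ nondecreasing, so that $d_{m-1} \le d$. Applying Proposition~\ref{3gen} with the index ``$m$'' there taken to be $m-1$, and choosing $s = t = m+d$, the hypothesis $s,t \ge (m-1) + d_{m-1} - d_0 + 1 = m + d_{m-1} - d_0$ is satisfied since $d_{m-1} \le d$. The resulting ideal $J \subseteq R = S[y,z]$ is generated by three forms, of degrees $s = m+d$, $t = m+d$, and $(m-1) + d_{m-1} \le m+d-1$; in particular all three have degree at most $m+d$. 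By part (1) of the proposition, $\reg_R(J) \ge \reg_S(I) + s + t - 2 \ge \reg_S(I)$, and since $R$ is again a standard graded polynomial ring and $J$ is generated by three forms of degree at most $m+d$, we conclude $\reg_S(I) \le \reg_R(J) \le \Phi(3, m+d)$, as desired. The case $m = 1$ is immediate, since then $\reg_S(I) = \deg(g_0) \le d$.

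There is essentially no obstacle here: the corollary is a direct consequence of Proposition~\ref{3gen}(1). The only things requiring any care are the bookkeeping involved in reindexing $m$ generators as $g_0,\ldots,g_{m-1}$ and the attendant off-by-one in the degree bound $m+d$; the degenerate edge cases (principal ideals, or ideals containing a unit, where there is nothing to prove); and the observation that the hypothesized explicit bound $\Phi(3,d)$ must be understood as uniform over all standard graded polynomial rings, so that it applies to $J$ inside the larger ring $R = S[y,z]$.
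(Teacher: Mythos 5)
Your proposal is correct and is exactly the argument the paper intends: the corollary is stated as an immediate consequence of Proposition~\ref{3gen}, obtained by feeding an $m$-generated ideal into the construction to produce a three-generated ideal in degrees at most $m+d$ whose regularity dominates the original's, yielding $\Phi(m,d) \le \Phi(3,m+d)$. Your bookkeeping (reindexing the generators, checking $s=t=m+d$ satisfies the hypothesis, and noting the bound must be uniform over the ambient ring) is accurate and fills in the details the paper leaves implicit.
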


We also get a larger lower bound on the possible value of $\Phi(3,d)$.  Recall that a function $f:\mathbb{N} \to \mathbb{N}$ is $\Omega(g(n))$ if there is a constant $c$ such that $f(n) \ge c \cdot g(n)$ for $n \gg 0$.

\begin{thm}\label{3reg}
    Let $\kk$ be a field and fix an integer $r \ge 2$.  Then there is a standard graded polynomial ring $R$ over $\kk$ and a graded ideal $J$ of $R$ generated by $3$ forms of degree $22r-2$ with $\reg(J) \ge 2^{2^{r-1}} + 44r - 6$.  In particular, $\Phi(3,d)$ is $2^{2^{\Omega(d)}}$.
\end{thm}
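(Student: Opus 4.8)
The plan is to feed a known doubly-exponential family into Proposition~\ref{3gen}. The essential external input is a sequence $I_1, I_2, \ldots$ of proper graded ideals, where $I_r$ lives in a polynomial ring $S_r = \kk[x_1,\ldots,x_{n_r}]$ (with $n_r$ allowed to grow as fast as necessary), such that $\reg_{S_r}(I_r) \ge 2^{2^{r-1}}$ while \emph{both} the maximal generator degree $d^{(r)}$ and the number of generators $m_r + 1$ grow only linearly in $r$. Doubly exponential regularity is classical once the number of variables is unbounded; what is needed here is to keep the pair $(m_r, d^{(r)})$ of size $O(r)$, since Stillman's bound $\Phi(m,d)$ depends on $m$ and $d$ alone. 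Such families come from Mayr--Meyer-type ideals, worked into regularity statements by Koh and by Bayer--Stillman. Given one, I would pad its generating sequence with redundant forms — this is harmless, as Proposition~\ref{3gen} does not require the $g_i$ to be a minimal generating set, and one may append multiples such as $x_1^{\,d^{(r)}-d_0}g_0$ of degree $d^{(r)}$, which increases $m_r$ in steps of $1$ without changing $d^{(r)}$ or the ideal $I_r$ — so that $m_r + d^{(r)} = 22r-2$ exactly; the restriction $r \ge 2$ is what guarantees $m_r + d^{(r)} \le 22r-2$ before padding.

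Next I would apply Proposition~\ref{3gen} to $I = I_r$ with $d = d^{(r)}$, $m = m_r$, and the choice $s = t = m+d = 22r-2$. The hypothesis $s,t \ge m+d-d_0+1$ then reads $d_0 \ge 1$, which holds because $I_r$ is proper. The proposition produces a graded ideal
$$J = \Big(y^{22r-2},\ z^{22r-2},\ \sum_{i=0}^{m} g_i\, y^{d-d_i+i} z^{m-i}\Big) \subseteq R = S_r[y,z]$$
generated by three forms of degree $22r-2$, and part~(1) gives
$$\reg_R(J) \ge \reg_{S_r}(I_r) + s + t - 2 = \reg_{S_r}(I_r) + 2(22r-2) - 2 \ge 2^{2^{r-1}} + 44r - 6,$$
which is the first assertion of the theorem.

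For the asymptotic statement, set $d := 22r-2$; then $r-1 = (d-20)/22$. Thus for every $d \equiv -2 \pmod{22}$ there is a three-generated ideal of generator degree $d$ with $\reg \ge 2^{2^{(d-20)/22}}$, and since the optimal function $\Phi(3,\cdot)$ is nondecreasing, taking for general $d$ the largest such value $\le d$ shows $\Phi(3,d) \ge 2^{2^{d/22 - 2}}$, i.e. $\Phi(3,d)$ is $2^{2^{\Omega(d)}}$.

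The main obstacle lies entirely in the first step: producing the family $I_r$ with the maximal degree \emph{and} the number of generators both linear in $r$ while $\reg(I_r)$ is doubly exponential in $r$. Everything afterward is the routine bookkeeping with Proposition~\ref{3gen} carried out above. The degree part is exactly where one must invoke — or re-derive — a Mayr--Meyer/Koh-style "degree-squaring" analysis, the cleanest route being to exhibit explicitly a nonzero class in $\Tor_1^{S_r}(I_r,\kk)_j$ with $j \ge 2^{2^{r-1}}$, which bounds $\reg(I_r)$ from below; the constant $22$ is simply an artifact of the generator count and degrees in the specific construction one chooses to run through Proposition~\ref{3gen}.
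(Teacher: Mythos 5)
Your proposal matches the paper's proof: the paper simply cites Koh's Mayr--Meyer-type family (an ideal generated by $22r-3$ quadrics with $\reg \ge 2^{2^{r-1}}$, so $m = 22r-4$, $d = 2$, and $m+d = 22r-2$ with no padding needed) and applies Proposition~\ref{3gen} with $s = t = 22r-2$ exactly as you do. The only difference is that you treat the existence of such a family as the ``main obstacle,'' whereas it is a published result of Koh that the paper invokes directly; your bookkeeping and the asymptotic conclusion are correct.
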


\begin{proof}
    By \cite{Koh98}, there is a polynomial ring $S$ and graded ideal $I$ generated by $22r-3$ quadrics with $\reg(I) \ge 2^{2^{r-1}}$.  Applying Proposition~\ref{3gen} with $m = 22r-4$ and $s = t = 22r-2$, we get an ideal $J$ generated by three forms of degree $22r-2$ and $\reg(J) \ge 2^{2^{r-1}} + 44r - 6$.
\end{proof}

We turn now to constructing prime ideals with large regularity.  In \cite{MP18}, Peeva and the author employed Rees-like algebras to create counterexamples of the Eisenbud-Goto Conjecture.  After homogenizing the defining ideals of such algebras, they computed the invariants of the new standard graded prime ideals in terms of the arbitrary starting ideal.  We record the essential part of their theorem here.

\begin{thm}[{McCullough-Peeva \cite{MP18}}]\label{reeslike}
    Let $\kk$ be any field. Let $I = (f_1,\ldots, f_m)$ be a graded ideal in the standard graded polynomial ring $S = \kk[x_1,\ldots,x_n]$.
There is a nondegenerate graded prime ideal $P$ in the standard graded polynomial ring
$R = S[y_1,\ldots,y_m,u_1,\ldots,u_m,z,v]$
satisfying
\begin{enumerate}
    \item $\reg_R(P) = \reg_S(I) + 2 + \sum_{i = 1}^m \deg(f_i)$,
    \item $\deg_R(R/P) = 2 \prod_{i = 1}^m (\deg(f_i) + 1)$.
    %\item $\pd_R(R/P) = \pd_S(S/I) + m - 1$.
\end{enumerate}
\end{thm}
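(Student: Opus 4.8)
The plan is to produce $P$, following McCullough--Peeva, as a standard-graded version of the defining ideal of the \emph{Rees-like algebra} of $I$, and to read off every invariant from an explicit minimal free resolution of that ideal.

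Write $d_i = \deg(f_i)$ and consider the Rees-like algebra
$$\mathcal{R}^{\ell}(I) = S[f_1 t, \ldots, f_m t,\, t^2] \subseteq S[t],$$
which is a domain because it is a subring of the domain $S[t]$. Grading it by total degree makes the surjection $B := S[y_1,\ldots,y_m,z] \twoheadrightarrow \mathcal{R}^{\ell}(I)$ given by $y_i \mapsto f_i t$ and $z \mapsto t^2$ a graded map once we declare $\deg(y_i) = d_i + 1$ and $\deg(z) = 2$, so its kernel $Q$ is a prime ideal. The first task is to identify $Q$ explicitly: it is generated by the quadrics $y_i y_j - f_i f_j z$ for $1 \le i \le j \le m$ together with the elements $\sum_i a_i y_i$ arising from the syzygies $\sum_i a_i f_i = 0$ of $I$; that these ``obvious'' relations exhaust $Q$ can be verified by a Hilbert-function comparison, or by inducting on the $t$-degree and peeling off powers of $z$.

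The core of the proof is the construction of a minimal graded free resolution of $B/Q$ over $B$. The decisive structural fact is that such a resolution is assembled, by an iterated mapping cone, from only two ingredients: the Koszul complex on $f_1,\ldots,f_m$, which resolves the ideal generated by the quadrics, and a minimal free resolution of $S/I$, which supplies the syzygetic relations---both pulled back to $B$ and twisted appropriately by the degrees $d_i$. Showing that the resulting complex is acyclic and that its differentials have all entries in the homogeneous maximal ideal (so that it is \emph{minimal}) is the main obstacle: it is the one place requiring genuine work rather than bookkeeping, and everything downstream rests on the exact graded Betti numbers it produces. From them one sees precisely how the resolution of $Q$ depends on $\reg_S(I)$, $\pd_S(I)$, and the degrees $d_i$.

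Since $\mathcal{R}^{\ell}(I)$ is graded only in the nonstandard fashion above, the last step passes to a standard-graded polynomial ring: introduce degree-one variables $u_1,\ldots,u_m,v$, homogenize the generators of $Q$ inside $R = S[y_1,\ldots,y_m,u_1,\ldots,u_m,z,v]$ to obtain $P$, and homogenize the mapping-cone resolution accordingly to obtain a minimal free resolution of $P$ over $R$. One checks that $P$ is still prime---it is the kernel of a surjection onto a domain---and that $P$ contains no linear form (nondegeneracy); then the homogenization turns the degree shifts recorded in Step 3 into the clean identities $\reg_R(P) = \reg_S(I) + 2 + \sum_i d_i$ and $\deg_R(R/P) = 2\prod_i (d_i + 1)$, giving (1) and (2). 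This standardization is careful bookkeeping rather than a serious difficulty, but the additive constant $2 + \sum_i d_i$ must be tracked on the nose.
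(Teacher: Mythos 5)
First, a point of context: the paper does not prove this statement---it is imported verbatim from McCullough--Peeva \cite{MP18} (``We record the essential part of their theorem here''), so there is no internal proof to compare against and your proposal has to be measured against the original argument. At the level of architecture you have the right construction: the Rees-like algebra $S[f_1t,\ldots,f_mt,t^2]$, the nonstandard grading with $\deg(y_i)=d_i+1$ and $\deg(z)=2$, the generating set of $Q$ consisting of the quadrics $y_iy_j-zf_if_j$ together with the syzygy elements $\sum_i a_iy_i$, and a final homogenization into $R=S[y,u,z,v]$. Your sketch of why those generators exhaust $Q$ (reduce modulo the quadrics to $c_0(x,z)+\sum_i c_i(x,z)y_i$ and split into even and odd powers of $t$) is essentially the argument in \cite{MP18}.

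The gaps are concentrated exactly where you locate the weight of the proof. The parenthetical claim that the Koszul complex on $f_1,\ldots,f_m$ ``resolves the ideal generated by the quadrics'' is false: no regular-sequence hypothesis is available (the entire point of the theorem is to feed in pathological ideals such as the Mayr--Meyer ideals), so that Koszul complex need not be acyclic, and in any case $(y_iy_j-zf_if_j)$ is an ideal of $B=S[y,z]$, not of $S$. Consequently the asserted iterated-mapping-cone resolution of $B/Q$---which you correctly identify as the step ``requiring genuine work''---is not established, and in fact this is not how \cite{MP18} proceeds: they do not build the full minimal free resolution of $B/Q$ (that structure was only worked out in later papers), but instead compute the regularity and projective dimension by more indirect arguments and obtain the multiplicity from the homogenization itself, each substitution of a degree-$e$ variable scaling the degree by $e$, which is precisely where $2\prod_i(d_i+1)$ comes from; it cannot be ``read off the degree shifts'' of a resolution as you suggest. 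Finally, the homogenization step is not bookkeeping: the assertions that the homogenized ideal remains prime and that homogenizing a minimal free resolution yields a minimal free resolution constitute the step-by-step homogenization theorem of \cite{MP18} (in essence a flat base change along $y_i\mapsto y_iu_i^{d_i}$, $z\mapsto zv$), and your justification ``it is the kernel of a surjection onto a domain'' does not apply to $P$ as you have defined it, namely as the ideal generated by homogenized generators of $Q$. As written, the proposal names the correct objects but proves neither of the two numbered identities.
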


The trick to achieving more extreme counterexamples to the Eisenbud-Goto Conjecture is to feed the Mayr-Meyer ideals, such as Koh's version \cite{Koh98}, into Proposition~\ref{3gen}, and moreover to do it twice, before constructing a Rees-like algebra.

\begin{thm} Over any field $\kk$ and for any integer $r$, there exists a standard graded polynomial ring $S$ and nondegenerate prime ideal $P \subseteq S$ with

$$\deg(P) = 396r \qquad \text{ and } \qquad \reg(P) \ge 2^{2^{r-1}} + 66r + 6.$$  In particular, $\Psi(e)$ is $2^{2^{\Omega(e)}}$.
\end{thm}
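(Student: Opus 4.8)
The plan is to combine the three tools assembled in the excerpt: Koh's Mayr--Meyer family, Proposition~\ref{3gen}, and the McCullough--Peeva Rees-like construction of Theorem~\ref{reeslike}. The idea is to start from Koh's ideal $I_0$ generated by $22r-3$ quadrics with $\reg(I_0)\ge 2^{2^{r-1}}$, apply Proposition~\ref{3gen} \emph{twice} to produce a three-generated ideal whose regularity still carries the doubly exponential term but whose generator degrees and count are now tiny (three generators of a small common degree), and then feed this three-generated ideal into Theorem~\ref{reeslike} to obtain a nondegenerate prime $P$. Because Theorem~\ref{reeslike}(2) gives $\deg(R/P) = 2\prod_{i=1}^m(\deg(f_i)+1)$, keeping the number of generators down to $3$ and the degrees small is exactly what makes $\deg(P)$ \emph{linear} in $r$ while $\reg(P)$ remains doubly exponential; a single application of Proposition~\ref{3gen} would already work since it outputs three generators, so the double application is presumably to optimize the constant in the degree.

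Here are the steps in order. First, take $S_0$ and $I_0\subseteq S_0$ from \cite{Koh98}: $22r-3$ quadrics with $\reg_{S_0}(I_0)\ge 2^{2^{r-1}}$. Second, apply Proposition~\ref{3gen} to $I_0$ with $m = 22r-4$, $d_0=\cdots=d_m = 2$, $d=2$, choosing $s=t$ at (or just above) the threshold $m+d-d_0+1 = 22r-3$; this yields a three-generated ideal $J_1$ with generators of degree $s$, $t$, $m+d = 22r-2$, and $\reg(J_1)\ge \reg_{S_0}(I_0) + s+t-2$. Third, apply Proposition~\ref{3gen} again, now to $J_1$ (which has $m'=2$ generators $g_0,g_1,g_2$ after reindexing, with $d_i$ nondecreasing and $d' = \max d_i$), with fresh variables $y,z$ and a new threshold $s',t'\ge m'+d'-d_0'+1$; this produces a three-generated ideal $J_2$ with three generators of a controlled common degree and $\reg(J_2)\ge \reg(J_1)+s'+t'-2$. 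One must check the degrees can be arranged so that the three generators of $J_2$ have a common degree and that all the additive regularity contributions sum to the claimed $+66r+6$; this is a bookkeeping exercise with the specific choices $22r-2$, etc. Fourth, feed $J_2 = (f_1,f_2,f_3)$ into Theorem~\ref{reeslike}: we get a nondegenerate prime $P$ in a polynomial ring $S = R[y_1,\dots,y_3,u_1,\dots,u_3,z,v]$ with $\reg_S(P) = \reg(J_2) + 2 + \sum_{i=1}^3\deg(f_i)$ and $\deg(S/P) = 2\prod_{i=1}^3(\deg(f_i)+1)$. Fifth, plug in the explicit degrees to verify $\deg(P) = 396r$ and $\reg(P)\ge 2^{2^{r-1}} + 66r+6$, and observe that $e = \deg(P)$ is linear in $r$, so $\reg$ is $2^{2^{\Omega(e)}}$.

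The main obstacle is the arithmetic of degree-matching and threshold-chasing across the two nested applications of Proposition~\ref{3gen}. After the first application the three generators of $J_1$ have degrees $s, t, 22r-2$, which are not automatically equal; to re-apply Proposition~\ref{3gen} cleanly one wants the input ideal's generators arranged with $d_i$ nondecreasing, and one needs to track how $m' + d' - d_0' + 1$ constrains $s'$ and $t'$, then choose $s', t'$ (and possibly $s, t$ in the first step) so that the final three generators of $J_2$ share a common degree and the accumulated additive terms $s+t-2$, $s'+t'-2$, $2+\sum\deg(f_i)$, together with the Koh bound $2^{2^{r-1}}$, reproduce exactly $2^{2^{r-1}}+66r+6$ and $\deg(P) = 2\prod(\deg f_i + 1) = 396r$. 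Since $396 = 2\cdot 2\cdot 9\cdot 11$, one expects the three final generator degrees to be something like $(2,9,11)r$-type expressions or small constants times $r$ whose incremented product is $396r$; pinning these down, and verifying the threshold inequalities $s,t \ge m+d-d_0+1$ hold for the chosen values and all $r\ge 2$ (with equality permitted), is the one genuinely fiddly point. Everything else is a direct invocation of the three cited/proved results, so once the numerology is fixed the proof is a two-line application, exactly parallel to the proof of Theorem~\ref{3reg}.
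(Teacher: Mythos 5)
Your proposal follows the paper's proof exactly: the paper takes the three-generated ideal $J$ of Theorem~\ref{3reg} (Koh's ideal fed once through Proposition~\ref{3gen} with $s=t=22r-2$), applies Proposition~\ref{3gen} a second time with $m=2$ and $s=t=3$ to get $J'$ with generator degrees $3,3,22r$ and $\reg(J')\ge 2^{2^{r-1}}+44r-2$, and then feeds $J'$ into Theorem~\ref{reeslike}. One correction to your aside, though: a single application of Proposition~\ref{3gen} would \emph{not} suffice for the ``in particular'' claim, because the three generators it produces all have degree $22r-2$, so Theorem~\ref{reeslike}(2) would make $\deg(P)$ cubic in $r$ and yield only $2^{2^{\Omega(e^{1/3})}}$; the whole point of the second application is to collapse two of the generator degrees to the constant $3$ so that $\deg(P)$ is linear in $r$.
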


\begin{proof}
    Begin with the ideal $J$ from Theorem~\ref{3reg} which has three generators of degree $22r-2$ and $\reg(J) \ge 2^{2^{r-1}} + 44r - 6$.  Applying Proposition~\ref{3gen} again with $m = 2$ and $s = t = 3$, we get an ideal $J'$ generated by forms of degrees $3,3,22r$ with $\reg(J') \ge 2^{2^{r-1}} + 44r-2$.  Applying Thoerem~\ref{reeslike} to $J'$ yields a standard graded prime ideal $P$ with $\deg(P) = 396r$ and $\reg(P) \ge 2^{2^{r-1}} + 66r +6$.
\end{proof}

In particular, we get counterexamples to the Eisenbud-Goto Conjecture for $r \ge 5$.  

We recall that the existence of a function $\Phi(m,d)$ bounding the regularity of an ideal generated by $m$ forms of degree at most $d$ is equivalent to a function $\Psi(e)$ bounding the regularity of nondegenerate prime ideals of degree $e$.  Given $\Phi(m,d)$, it follows from \cite{CCMPV19} that 

$$\Psi(e) \le \max\{e^2,\Phi(e,e^2)+1\}.$$

Similarly, given an explicit function $\Psi(e)$, it follows from \cite{MP18} that

$$\Phi(m,d) \le \Psi(2(d+1)^m).$$

There is a parallel version of Stillman's Conjecture \cite{PS09} which posits a bound $\Phi_{\mathrm{pd}}(m,d)$ on the projective dimension of graded ideals generated by $m$ forms of degree at most $d$.  Caviglia showed that the existence of $\Phi(m,d)$ is equivalent to that of $\Phi_{\mathrm{pd}}(m,d)$.  In fact, he showed that

$$\Phi(m,d) \le (2d)^{2^{\Phi_{\mathrm{pd}}(m,d)-2}}$$

and

$$\Phi_{\mathrm{pd}}(m,d) \le \Phi(m,d) \cdot \sum_{i = 0}^{\Phi(m,d)} m^{2i}.$$

See \cite[Theorem 4]{MS13} for details.  
There is yet a fourth function $\Psi_{\mathrm{pd}}(e)$ bounding the projective dimension of nondegenerate prime ideals of degree $e$ over an algebraically closed field.  Given $\Phi_{\mathrm{pd}}(m,d)$, it follows from \cite[Theorem 5.2]{CCMPV19} that

$$\Psi_{\mathrm{pd}}(e) \le \max\{e, \Phi_{\mathrm{pd}}(e,e^2)-1\}.$$

Similarly, given $\Psi_{\mathrm{pd}}(e)$, it follows from \cite[Theorem 1.6]{MP18} that

$$\Phi_{\mathrm{pd}}(m,d) \le \Psi_{\mathrm{pd}}(2(d+1)^m).$$

Thus finding explicit formulas for any of $\Psi(e), \Psi_{\mathrm{pd}}(e), \Phi(m,d), \Phi_{\mathrm{pd}}(m,d)$ will yield explicit formulas for all of the others.  Currently the best lower bound estimate for $\Phi_{\mathrm{pd}}(3,d)$ is $\sqrt{d}^{\sqrt{d}-1}$ \cite[Corollary 3.6]{BMNSSS11}.  Using Theorem~\ref{3gen}, we can increase this lower bound as well.

\begin{thm}
    For any integer $r \ge 1$, there is a standard graded ideal $I$ with three generators in degree $4r+1$ with $\mathrm{pd}(I) \ge r^{2r}$.  In particular, $\Phi_{\mathrm{pd}}(3,d)$ is $(d/5)^{\Omega(d)}$.
\end{thm}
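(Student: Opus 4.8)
The plan is to combine Proposition~\ref{3gen} with a family of ideals of very large projective dimension, choosing the two new exponents to be equal so that all three generators of the output ideal land in a single degree. Concretely, part~(2) of Proposition~\ref{3gen} raises the projective dimension by $2$ while producing generators of degrees $s$, $t$, and $m+d$; taking $s=t=m+d$ makes all three degrees equal to $m+d$, and the hypothesis $s,t\ge m+d-d_0+1$ holds automatically, since a graded ideal lies in the irrelevant maximal ideal and so $d_0=\deg(g_0)\ge 1$. Thus the theorem reduces to exhibiting, for each $r\ge 1$, a graded ideal $I'=(g_0,\dots,g_m)$ in a polynomial ring $S$ with $m+\deg(g_m)=4r+1$ and $\pd_S(I')\ge r^{2r}-2$ (equivalently $\pd_S(S/I')\ge r^{2r}-1$).

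Such ideals come from the constructions of \cite{BMNSSS11}: for each choice of parameters those produce an ideal with a controlled number of generators, of controlled degree, whose projective dimension grows superexponentially --- roughly like the degree raised to the power of the number of generators. Since one can arrange both of these quantities to be linear in a single parameter $n$, and since both of the relevant linear quantities on our side --- the target generating degree $4r+1$ and the exponent $2r$ in $r^{2r}$ --- are also linear, one takes $n$ to be a suitable constant multiple of $r$ (roughly $n=2r$) and then reads off from \cite{BMNSSS11} the precise generator count, their degrees, and the projective-dimension bound so as to arrange $m+\deg(g_m)=4r+1$ together with $\pd_S(I')\ge r^{2r}-2$ for every $r\ge 1$. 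This is the same family underlying \cite[Corollary~3.6]{BMNSSS11}; there it is pushed to three generators by a reduction that raises the generating degree roughly to its square (producing the bound $\sqrt{d}^{\sqrt{d}-1}$), and it is exactly this squaring that we avoid, since the output of Proposition~\ref{3gen} has generators of degree only $m+d$, linear in the input data.

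Given such an $I'$, I would apply Proposition~\ref{3gen} with $s=t=m+\deg(g_m)=4r+1$ to obtain an ideal $J\subseteq S[y,z]$ generated by three forms, each of degree $4r+1$, with $\pd(J)\ge\pd_S(I')+2\ge r^{2r}$; this $J$ is the asserted ideal. For the asymptotic statement, $\Phi_{\mathrm{pd}}(3,\cdot)$ is non-decreasing, so for $d$ large I would set $r=\floor{(d-1)/4}\ge 1$, giving $\Phi_{\mathrm{pd}}(3,d)\ge\Phi_{\mathrm{pd}}(3,4r+1)\ge r^{2r}$; since $r\ge d/5$ and $2r=\Omega(d)$ for $d\gg 0$, this is $(d/5)^{\Omega(d)}$. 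I expect the only genuine work to be the bookkeeping in the middle paragraph --- checking that the ideals of \cite{BMNSSS11} can be taken with exactly the right number of generators and degree so that a single pass through Proposition~\ref{3gen} lands on degree $4r+1$ with projective dimension at least $r^{2r}$; everything else is immediate from Proposition~\ref{3gen}(2) and the monotonicity of $\Phi_{\mathrm{pd}}$.
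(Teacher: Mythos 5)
Your proposal is correct and follows essentially the same route as the paper: the one piece you defer as ``bookkeeping'' is settled by \cite[Corollary 3.7]{BMNSSS11}, which supplies an ideal with exactly $2r+1$ generators of degree $2r+1$ and $\pd_S(S/I)\ge r^{2r}$, so that $m=2r$, $d=2r+1$, and $s=t=m+d=4r+1$ exactly as you prescribe. Everything else (the choice $s=t=m+d$, the single pass through Proposition~\ref{3gen}(2), and the monotonicity argument for the asymptotic claim) matches the paper's argument.
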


\begin{proof}
    By \cite[Corollary 3.7]{BMNSSS11}, there is an ideal $I$ with $2r+1$ generators of degree $2r+1$ and $\pd_S(S/I) \ge r^{2r}$.  Applying Proposition~\ref{3gen} with $m = 2r$ and $s = t = 4r+1$ yields an ideal $J$ generated by three forms of degree $4r+1$ and $\pd_R(J) \ge r^{2r} + 1$.
\end{proof}

\section*{Acknowlegements} 

This work was completed while preparing for the Workshop on Regularity and Syzygies hosted at the University of Illinois Chicago and organized by Wenliang Zhang.  The author thanks Paolo Mantero, Matt Mastroeni, and Lance Edward Miller for feedback on an earlier draft of this paper.  The author was support by National Science Foundation grant DMS--1900792.

\bibliographystyle{plainurl}
\bibliography{reg}

\end{document}